\newcolumntype{K}[1]{>{\centering\arraybackslash}p{#1}}
\theoremstyle{definition} \newtheorem{theorem}{Theorem}[section]
\newtheorem{proposition}[theorem]{Proposition}
\newtheorem{corollary}[theorem]{Corollary}
\newtheorem{lemma}[theorem]{Lemma}
\newtheorem{example}[theorem]{Example}
\numberwithin{table}{section} \numberwithin{figure}{section}
\renewcommand{\geq}{\geqslant} \renewcommand{\leq}{\leqslant}
 \newcommand{\ZZ}{\mathbb{Z}}
\newcommand{\bigchi}{\raisebox{2pt}{\large$\chi$}}
\title{Symmetric complete intersections}
\date{\today}
\keywords{Complete intersection, symmetric group, graded character}
\subjclass[2000]{Primary 13A50; Secondary 13D40, 05E10}
\author{Federico Galetto}
\address{McMaster University\\1280 Main St W\\Hamilton Hall 407\\
  Hamilton, ON, L8S 4K1\\Canada}
\email{galettof@math.mcmaster.ca}
\urladdr{http://math.galetto.org}
\author{Anthony V.~Geramita}
\author{David L.~Wehlau}
\address{Department of Mathematics and Computer Science \\
 Royal Military College, Kingston, ON, K7K 7B4, Canada}
\email{wehlau@rmc.ca}
\begin{document}

\begin{abstract}
  We consider complete intersection ideals in a polynomial ring over a
  field of characteristic zero that are stable under the action of
  the symmetric group permuting the variables.  We determine the
  possible representation types for these ideals, and describe formulas
  for the graded characters of the corresponding quotient rings.
\end{abstract}

\maketitle{}

\section{Introduction}
\label{sec:introduction}

In recent work \cite{1407.7228}, the last two authors, together with
A.~Hoefel, analyzed a family of artinian Gorenstein algebras that are
also graded representations of the symmetric group. Taking advantage
of the additional structure, they were able to compute the graded
character of the algebra, from which the Hilbert series of the algebra
can be readily obtained.  Inspired by this result, we apply similar
methods to a different class of Gorenstein algebras, namely complete
intersections.

Consider the polynomial ring $R = \Bbbk [x_1,\ldots,x_n]$ and let
$I\subseteq R$ be a homogeneous complete intersection ideal.  Such an
ideal is generated by a regular sequence $f_1,\ldots,f_r$, with
$r\leqslant n$.  Many algebraic properties of $I$, such as its Hilbert
series and its graded Betti numbers, are completely determined by the
degrees of $f_1,\ldots,f_r$.  By Nakayama's lemma, these numbers are
the same as the degrees of the elements in any homogeneous basis of
the $\Bbbk$-vector space $I/\mathfrak{m}I$, where
$\mathfrak{m} = (x_1,\ldots,x_n)$.

Now let the symmetric group $\mathfrak{S}_n$ act on $R$ by permuting
the variables.  We consider complete intersection ideals
$I\subseteq R$ that are stable under the action of $\mathfrak{S}_n$.
In this case, the action descends to the quotient $I/\mathfrak{m}I$
making it a finite dimensional representation of $\mathfrak{S}_n$.
Therefore the ideal $I$ is further parametrized by the isomorphism
type of $I/\mathfrak{m}I$.

We assume $\Bbbk$ has characteristic zero, so $I/\mathfrak{m}I$
decomposes into a direct sum of irreducible representations.  Based on
this decomposition, we prove in Proposition \ref{pro:1} that
$I/\mathfrak{m}I$ belongs to one of four families of isomorphism
types.  We further examine these families in \S~\ref{sec:resol-char},
providing examples of each and determining formulas for the graded
characters of the quotient $R/I$.

An important question remains open: for a fixed isomorphism type of
$I/\mathfrak{m}I$, in what degrees can the generators of $I$ occur?
We plan to address this question in a future paper \cite{GGW16}.

All three authors gratefully acknowledge the partial support of NSERC
for this work.

\section{Classification of \texorpdfstring{$\mathfrak{S}_n$}{Sn}-stable complete
  intersection ideals}
\label{sec:class-mathfr-stable}

Throughout this paper, we consider the standard graded polynomial ring
$R := \Bbbk [x_1,\ldots,x_n]$, where $\Bbbk$ is a field of
characteristic zero.  All ideals we consider are homogeneous, unless
noted otherwise.  An ideal $I \subseteq R$ is called a \emph{complete
  intersection ideal} if it is generated by a regular sequence
\cite[Ch.~17]{MR1322960}.  Note that regular sequences in $R$ have
length at most $n$.  If $I \subseteq R$ is a complete intersection
ideal, then every minimal generating set of $I$ is a regular sequence.

The symmetric group $\mathfrak{S}_n$ acts linearly on
$R_1 = \langle x_1,\ldots,x_n \rangle$ by permuting the variables.
This action extends naturally to all of $R$.  As a result, $R$ is a
graded representation of $\mathfrak{S}_n$ over $\Bbbk$.

We will use some standard notation and ideas which can be found, for
instance, in the book \cite{MR1824028}. We say
$\lambda = (\lambda_1,\ldots,\lambda_r)$ is a \emph{partition} of $n$,
where the $\lambda_i$ are positive integers
$\lambda_1 \geq \ldots \geq \lambda_r \geq 1$, if
$\sum_{i=1}^r \lambda_i = n$. In this case we write $\lambda\vdash n$.
We use the shorthand notation $\lambda_i^{m_i}$ to indicate that the
integer $\lambda_i$ appears $m_i$ times in $\lambda$. For example, we
write $(2^2,1^3)$ to denote the partition $(2,2,1,1,1)$.  We say the
partition $\lambda = (\lambda_1,\ldots,\lambda_r)$ contains the
partition $\mu = (\mu_1,\ldots,\mu_s)$ if $r\geqslant s$ and
$\lambda_i \geqslant \mu_i$ for all $i\leqslant s$.

If $\lambda = (\lambda_1,\ldots,\lambda_r)$ is a partition of $n$,
then the \emph{Young diagram} of $\lambda$ is an array of $n$ boxes
having $r$ left-justified rows and with the $i$-th row containing
$\lambda_i$ boxes.  A \emph{Young tableau} with shape $\lambda$ is a
bijective filling of the boxes of the Young diagram of $\lambda$ with
the numbers $1,2,\ldots ,n$.  A Young tableau is called
\emph{standard} if the entries of each row form an increasing sequence
from left to right, and the entries of each column form an increasing
sequence from top to bottom.

As is well-known, the irreducible representations of the symmetric
group $\mathfrak{S}_n$, over a field of charcteristic zero, are in
one-to-one correspondence with the partitions of $n$
\cite[Thm.~2.4.6]{MR1824028}.  Denote by $S^\lambda$ the irreducible
representation associated with the partition $\lambda$.  The standard
Young tableaux with shape $\lambda$ form a basis of $S^\lambda$.  If
$T$ is a standard tableau with shape $\lambda$ and the entries $i,j$
belong to the same column of $T$, then the transposition
$(i\ j) \in \mathfrak{S}_n$ acts on $T$ by:
\begin{equation*}
  (i\ j) T = -T.
\end{equation*}
We do not specify the action of other permutations on $T$ since, just
using the above equation, we can describe a property of the ideals of
$R$ generated by a single irreducible representation $S^\lambda$.

\begin{lemma}
  \label{lem:2}
  Let $\lambda$ be a partition of $n$ and let
  $\varphi \colon S^\lambda \to R_d$ be a non-zero map of
  $\mathfrak{S}_n$-representations for some $d\in \ZZ$. If $T$ is a
  standard tableau with shape $\lambda$ containing entries $i$ and $j$
  in the same column, then $\varphi (T)$ is a polynomial divisible by
  $x_i - x_j$.
\end{lemma}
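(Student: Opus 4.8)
The plan is to exploit the equivariance of $\varphi$ together with the sign relation for a column transposition recorded just before the statement. Fix a standard tableau $T$ of shape $\lambda$ having $i$ and $j$ in a common column, and set $p := \varphi(T) \in R_d$. Write $\sigma = (i\ j) \in \mathfrak{S}_n$ for the transposition; its action on $R$ simply interchanges $x_i$ and $x_j$ and fixes all other variables. The goal is to show $x_i - x_j$ divides $p$.

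First I would apply equivariance. Since $\varphi$ is a map of $\mathfrak{S}_n$-representations, we have $\sigma \cdot p = \sigma \cdot \varphi(T) = \varphi(\sigma \cdot T) = \varphi(-T) = -p$, where the third equality uses the displayed relation $(i\ j)T = -T$ (valid precisely because $i$ and $j$ lie in the same column of the standard tableau $T$) and the last equality uses linearity of $\varphi$. Thus $p$ is anti-invariant under the transposition of the variables $x_i$ and $x_j$.

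Next I would convert this anti-invariance into a vanishing condition modulo the linear form $x_i - x_j$. The quotient $R/(x_i - x_j)$ is again a polynomial ring, in the $n-1$ variables obtained by identifying $x_i$ with $x_j$, and I write $\bar{p}$ for the image of $p$. On the one hand, reducing the relation gives $\overline{\sigma \cdot p} = \overline{-p} = -\bar{p}$. On the other hand, interchanging two variables that have already been identified changes nothing, so $\overline{\sigma \cdot p} = \bar{p}$. Combining these yields $\bar{p} = -\bar{p}$, and since $\Bbbk$ has characteristic zero the factor of $2$ is invertible, forcing $\bar{p} = 0$. Hence $p$ lies in the ideal $(x_i - x_j)$, which is exactly the assertion that $x_i - x_j$ divides $\varphi(T)$.

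The argument is short, and I do not anticipate a genuine obstacle; the only point requiring care is the bookkeeping of the two actions. One must be sure that the column transposition acting on the tableau $T$ corresponds, through $\varphi$, to the interchange of variables in $R$, and this is supplied by nothing more than the equivariance of $\varphi$. I would also flag that the characteristic-zero hypothesis enters solely at the final cancellation of the factor of $2$; over a field of characteristic $2$ the conclusion can fail, since anti-invariance and invariance coincide.
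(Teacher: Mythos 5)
Your proof is correct and follows essentially the same route as the paper: both apply equivariance to the relation $(i\ j)T=-T$ to conclude $(i\ j)\cdot p=-p$, and then deduce divisibility by $x_i-x_j$. The only difference is that you spell out the final divisibility step (via reduction modulo $(x_i-x_j)$ and cancelling the factor of $2$ in characteristic zero), which the paper simply asserts; this is a welcome, fully valid elaboration rather than a different argument.
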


\begin{proof}
  Note that Schur's Lemma implies that the map $\varphi$ is injective
  (see for example \cite[Thm.~1.6.5]{MR1824028}).

  Let $p = \varphi (T)$. Since $\varphi$ is
  $\mathfrak{S}_n$-equivariant, we have
  \begin{equation*}
    (i\ j) p = \varphi ((i\ j) T) = \varphi (-T) = -\varphi (T) = -p.
  \end{equation*}
  Therefore $p$ is divisible by $x_i - x_j$.
\end{proof}

\begin{corollary}
  \label{cor:1}
  Assume $n\geqslant 5$. Let $\lambda$ be a partition containing
  $(2,2)$ and let $\varphi \colon S^\lambda \to R_d$ be a non-zero map
  of $\mathfrak{S}_n$-representations for some $d\in \ZZ$. If $I$ is
  the ideal of $R$ generated by the image of $\varphi$, then $I$ is
  not a complete intersection.
\end{corollary}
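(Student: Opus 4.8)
The plan is to assume that $I$ is a complete intersection and derive a contradiction. By Schur's Lemma (as used in the proof of Lemma \ref{lem:2}) the map $\varphi$ is injective, so its image is a copy of $S^\lambda$ lying in the single degree $R_d$. Hence the polynomials $\varphi(T)$, as $T$ ranges over the standard tableaux of shape $\lambda$, form a $\Bbbk$-basis of the image and therefore a minimal generating set of $I$: since $I$ is generated in degree $d$ we have $\mathfrak{m}I \subseteq R_{\geqslant d+1}$, so $(I/\mathfrak{m}I)_d$ is exactly the image, and any basis of it is a minimal generating set. If $I$ were a complete intersection, this minimal generating set would be a regular sequence. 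I will exhibit two of these generators sharing a common factor, which is incompatible with their belonging to a regular sequence.

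For the incompatibility I would first record that any two elements of a homogeneous regular sequence in $R$ are coprime. Indeed, $R$ is a UFD and homogeneous regular sequences may be permuted, and an initial segment of a regular sequence is again one; so after reordering it suffices to treat a length-two regular sequence $f,g$. If a non-unit $h$ divided both $f$ and $g$, then $g\cdot(f/h) = (g/h)\cdot f \in (f)$, while $f/h$ is a nonzero homogeneous element of degree $\deg f - \deg h < \deg f$ and hence does not lie in $(f)$; thus $g$ would be a zero divisor on $R/(f)$, contradicting that $f,g$ is a regular sequence.

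It remains to produce two distinct standard tableaux $T,T'$ of shape $\lambda$ having the same pair of entries in a common column: by Lemma \ref{lem:2} the difference of the corresponding variables then divides both $\varphi(T)$ and $\varphi(T')$, supplying the common factor. Since $\lambda$ contains $(2,2)$, the first column has at least two boxes, so I place $1$ and $2$ in positions $(1,1)$ and $(2,1)$ and complete the tableau using $3,\ldots,n$ in the remaining boxes. The number of such completions equals the number of standard fillings by $3,\ldots,n$ of the skew shape obtained by deleting the boxes $(1,1)$ and $(2,1)$, and I claim this is at least two whenever $n\geqslant 5$. Granting the claim, any two distinct completions $T,T'$ share the pair $\{1,2\}$ in the first column, so $x_1-x_2$ divides both $\varphi(T)$ and $\varphi(T')$; these are distinct minimal generators (as $\varphi$ is injective) with a common factor, contradicting the previous paragraph.

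The main obstacle is this combinatorial claim, which must be verified for every $\lambda$ containing $(2,2)$ with $n\geqslant 5$. I would argue that distinct completions correspond to distinct linear extensions of the poset on the remaining boxes, ordered so that one box precedes another exactly when the former is weakly to the northwest of the latter; such a poset has more than one linear extension as soon as it contains two incomparable boxes. If $\lambda_1\geqslant 3$ the boxes $(1,3)$ and $(2,2)$ are incomparable; if $\lambda$ has at least three rows the boxes $(1,2)$ and $(3,1)$ are incomparable; and the only remaining possibility, $\lambda=(2,2)$, is excluded by the hypothesis $n\geqslant 5$. This disposes of every case and completes the argument.
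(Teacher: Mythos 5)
Your proof is correct and follows essentially the same route as the paper: both arguments produce two distinct standard tableaux of shape $\lambda$ with the entries $1$ and $2$ in the same column, apply Lemma \ref{lem:2} to get $x_1-x_2$ as a common factor of two distinct minimal generators, and conclude that the minimal generating set cannot be a regular sequence. The differences are only in level of detail: the paper exhibits the two tableaux explicitly (implicitly using the same case split $\lambda_1\geqslant 3$ versus at least three rows that you make via linear extensions), and it simply asserts the coprimality property of regular sequences that you take the trouble to prove.
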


\begin{proof}
  Since $\lambda$ contains $(2,2)$ and $n\geqslant 5$, there are
  standard tableaux $T_1$ and $T_2$ with shape $\lambda$ containing
  the squares
  \begin{center}
    \ytableausetup{boxsize=normal}
    \begin{ytableau}
      1 & 3 \\
      2 & 4
    \end{ytableau}
    \qquad\text{and}\qquad
    \begin{ytableau}
      1 & 3 \\
      2 & 5
    \end{ytableau}
  \end{center}
  respectively.  Let $p_1 = \varphi (T_1)$ and $p_2 = \varphi (T_2)$.
  By Lemma \ref{lem:2}, both $p_1$ and $p_2$ are divisible by
  $x_1 - x_2$.  Since the standard tableaux on $\lambda$ form a basis
  of $S^\lambda$ and since $\varphi$ is injective, the images of the
  standard tableaux under $\varphi$ form a minimal generating set of
  the ideal $I$. Both $p_1$ and $p_2$ are part of this minimal
  generating set. However, $p_1$ and $p_2$ have $x_1 - x_2$ as a
  common factor; therefore our minimal generating set of $I$ is not a
  regular sequence. Therefore $I$ is not a complete intersection.
\end{proof}

A partition of the form $(a,1^b)$ is called a \emph{hook}.  A
partition $\lambda$ is a hook if and only if it does not contain
$(2,2)$. Using this terminology, Corollary \ref{cor:1} implies that an
irreducible $S^\lambda \subseteq R$ generating a complete intersection
ideal must be a hook. However, not all hooks generate complete
intersections.

\begin{corollary}
  \label{cor:2}
  Let $\lambda$ be a hook partition of $n$ containing $(2,1,1)$ and
  let $\varphi \colon S^\lambda \to R_d$ be a non-zero map of
  $\mathfrak{S}_n$-representations for some $d\in \ZZ$. If $I$ is the
  ideal of $R$ generated by the image of $\varphi$, then $I$ is not a
  complete intersection.
\end{corollary}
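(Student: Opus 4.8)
The plan is to adapt the argument of Corollary \ref{cor:1} essentially verbatim, the only new work being the construction of two suitable standard tableaux. I would exhibit two distinct standard tableaux whose images under $\varphi$ both lie in a minimal generating set of $I$ and yet share the common factor $x_1 - x_2$; by the same reasoning as in Corollary \ref{cor:1}, this prevents the minimal generating set from being a regular sequence.

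First I would record the combinatorial constraints on the shape. Since $\lambda$ is a hook containing $(2,1,1)$, I can write $\lambda = (a,1^b)$ with $a \geq 2$ and $b \geq 2$; equivalently, the Young diagram of $\lambda$ has a first row of length at least $2$ and a first column of length at least $3$. In particular, $\lambda$ accommodates, in its top-left corner, both of the configurations
\begin{center}
  \ytableausetup{boxsize=normal}
  \begin{ytableau}
    1 \\ 2 \\ 3
  \end{ytableau}
  \qquad and \qquad
  \begin{ytableau}
    1 & 3 \\ 2
  \end{ytableau}
\end{center}
the first using the three boxes at the top of the first column (which exist because $b \geq 2$), the second using the boxes in positions $(1,1)$, $(2,1)$, $(1,2)$ (the last existing because $a \geq 2$).

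Next I would produce the two tableaux explicitly. Let $T_1$ be a standard tableau with shape $\lambda$ whose first column begins $1,2,3$ from the top — for instance, fill the first column downward with $1,\ldots,b+1$ and the remainder of the first row with $b+2,\ldots,n$ — and let $T_2$ be a standard tableau with shape $\lambda$ having $1$ in position $(1,1)$, $2$ in position $(2,1)$, and $3$ in position $(1,2)$, completed arbitrarily. Both exist by the shape constraints, and they are distinct since $3$ lies in the first column of $T_1$ but in the first row of $T_2$. In each of $T_1$ and $T_2$ the entries $1$ and $2$ occupy the same (first) column, so Lemma \ref{lem:2} shows that $\varphi(T_1)$ and $\varphi(T_2)$ are both divisible by $x_1 - x_2$.

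Finally, exactly as in Corollary \ref{cor:1}, the injectivity of $\varphi$ together with the fact that standard tableaux form a basis of $S^\lambda$ implies that $\varphi(T_1)$ and $\varphi(T_2)$ are two distinct elements of a minimal generating set of $I$; since they share the non-unit common factor $x_1 - x_2$, this minimal generating set cannot be a regular sequence, and so $I$ is not a complete intersection. I do not anticipate a genuine obstacle here: the one point requiring care is verifying that two distinct standard tableaux of the required form exist from the bare hypothesis ``hook containing $(2,1,1)$,'' which is precisely why I isolate the constraints $a \geq 2$ and $b \geq 2$ at the outset. I would also remark that no lower bound on $n$ beyond the forced inequality $n = a+b \geq 4$ is needed, in contrast to the hypothesis $n \geq 5$ of Corollary \ref{cor:1}.
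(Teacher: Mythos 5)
Your proposal is correct and follows the paper's own argument essentially verbatim: the paper likewise exhibits two distinct standard tableaux of hook shape, each with $1$ and $2$ in the first column (its choices being the hooks with first column $1,2,3$ and entry $4$ in position $(1,2)$, respectively first column $1,2,4$ and entry $3$ in position $(1,2)$), and then applies Lemma \ref{lem:2} and the common-factor argument of Corollary \ref{cor:1}. Your slightly different pair of tableaux and the explicit verification that standard completions exist are fine, and your observation that no hypothesis on $n$ beyond the forced $n\geq 4$ is needed matches the statement as given.
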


\begin{proof}
  Since $\lambda$ contains $(2,1,1)$, there are standard tableaux
  $T_1$ and $T_2$ with shape $\lambda$ containing the hooks
  \begin{center}
    \ytableausetup{boxsize=normal}
    \begin{ytableau}
      1 & 4\\
      2\\
      3
    \end{ytableau}
    \qquad\text{and}\qquad
    \begin{ytableau}
      1 & 3 \\
      2\\
      4
    \end{ytableau}
  \end{center}
  respectively.  Let $p_1 = \varphi (T_1)$ and $p_2 = \varphi (T_2)$.
  By Lemma \ref{lem:2}, both $p_1$ and $p_2$ are divisible by
  $x_1 - x_2$.  Thus $I$ is not a complete intersection.
\end{proof}

As a consequence of Corollaries \ref{cor:1} and \ref{cor:2}, the only
irreducible representations $S^\lambda \subseteq R$ that can generate
complete intersections are:
\begin{enumerate}[label=\alph*)]
\item the trivial representation $S^{(n)}$ (1-dimensional),
\item the alternating representation $S^{(1^n)}$ (1-dimensional),
\item the standard representation $S^{(n-1,1)}$ ($(n-1)$-dimensional),
\item and, for $n=4$, the exceptional case $S^{(2,2)}$
  (2-dimensional).
\end{enumerate}
Moreover, there are complete intersections of each of these four
types.

Note that trivial representations correspond to symmetric polynomials
while alternating representations correspond to alternating
polynomials, i.e., products of the Vandermonde determinant on
$x_1,\ldots,x_n$ with a symmetric polynomial.

Consider an ideal $I\subseteq R$ and let
$\mathfrak{m} = (x_1,\ldots,x_n)$ be the irrelevant ideal of $R$.  By
the graded version of Nakayama's Lemma (see
\cite[Cor.~4.8]{MR1322960}), the homogeneous polynomials
$f_1,\ldots,f_t$ form a minimal generating set of $I$ if and only if
their residue classes form a basis of $I/\mathfrak{m}I$.  Thus a basis
of $I/\mathfrak{m}I$ can be lifted to a minimal generating set of $I$.
Equivalently, one can define a section $I/\mathfrak{m}I \to I$ of the
projection $I\to I/\mathfrak{m}I$, and lift a basis of
$I/\mathfrak{m}I$ along this section.  Note that liftings and sections
are not unique.

Now suppose $I$ is $\mathfrak{S}_n$-stable.  Since $\mathfrak{m}I$ is
clearly $\mathfrak{S}_n$-stable, the action of $\mathfrak{S}_n$
descends to $I/\mathfrak{m}I$.  Hence the projection
$I \to I/\mathfrak{m}I$ is a map of graded $\mathfrak{S}_n$-representations
and it admits degree-preserving $\mathfrak{S}_n$-equivariant sections.  Fix one such
section and let $V$ denote its image.  The vector space $V$ is a
graded $\mathfrak{S}_n$-stable subspace of $I$ and, by our previous
observations, any basis of $V$ is also a minimal generating set of
$I$.  We call $V$ an \emph{$\mathfrak{S}_n$-equivariant lift} of
$I/\mathfrak{m}I$ to $R$.  Observe that if $V$ and $V'$ are
$\mathfrak{S}_n$-equivariant lifts of $I/\mathfrak{m}I$ to $R$, then
$V \cong V'$ as graded $\mathfrak{S}_n$-representations.

Next we consider ideals that are generated by more than one
irreducible representation.

\begin{corollary}
  \label{cor:3}
  Let $I$ be an $\mathfrak{S}_n$-stable ideal of $R$ and let $V$ be an
  $\mathfrak{S}_n$-equivariant lift of $I/\mathfrak{m}I$ to $R$. If
  $V$ contains at least two (potentially isomorphic) non-trivial
  irreducible representations of $\mathfrak{S}_n$, then $I$ is not a
  complete intersection.
\end{corollary}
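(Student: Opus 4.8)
The plan is to reduce Corollary \ref{cor:3} to the previous results by finding, inside two distinct irreducible summands of $V$, a pair of minimal generators sharing a common linear factor. Here is the strategy.

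The plan is to reduce to the mechanism used in the proof of Corollary \ref{cor:1}: I will produce two distinct minimal generators of $I$ sharing the common linear factor $x_1 - x_2$, which is enough to conclude that no minimal generating set of $I$ can be a regular sequence.

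First I would decompose $V = \bigoplus_k W_k$ into irreducible $\mathfrak{S}_n$-subrepresentations and single out two of the non-trivial summands, say $W_1$ and $W_2$ (these may be isomorphic, i.e.\ two copies of the same $S^\lambda$). Each $W_k$ is the image of a non-zero equivariant map $\varphi_k \colon S^{\mu_k} \to R_{d_k}$, where the partition $\mu_k$ is not equal to $(n)$, since $W_k$ is non-trivial. Consequently $\mu_k$ has at least two parts, so the first column of its Young diagram contains at least two boxes.

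The key point is that every partition $\mu \neq (n)$ admits a standard tableau whose first column contains both $1$ and $2$. I would construct one explicitly by filling the boxes column by column from left to right, each column from top to bottom. Writing $c_j$ for the number of boxes in the $j$-th column, the entry in row $i$ and column $j$ of this filling equals $c_1 + \cdots + c_{j-1} + i$; this increases with $j$, so the rows are increasing, the columns are visibly increasing, and the filling is a standard tableau. Its first column is $1, 2, \ldots, c_1$ with $c_1 \geq 2$, so it contains $1$ and $2$. Calling this tableau $T_k$ for the shape $\mu_k$, Lemma \ref{lem:2} shows that $p_k := \varphi_k(T_k)$ is divisible by $x_1 - x_2$.

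Finally, as in the proof of Corollary \ref{cor:1}, the images of the standard tableaux form a basis of each $W_k$, so $p_1 \in W_1$ and $p_2 \in W_2$ are nonzero vectors in independent summands of $V$; they are therefore distinct elements of a basis of $V$ and hence distinct members of a minimal generating set of $I$. Since $p_1$ and $p_2$ have the common factor $x_1 - x_2$, this minimal generating set is not a regular sequence, and $I$ is not a complete intersection. The only step demanding care is the uniform construction of the column-reading tableau: it is what lets me bypass a case analysis over the admissible representation types and instead derive the conclusion from a single application of Lemma \ref{lem:2}, with the ``common factor forbids a regular sequence'' principle being exactly the one already invoked in Corollary \ref{cor:1}.
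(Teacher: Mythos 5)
Your proof is correct and takes essentially the same route as the paper's: the paper also splits off two non-trivial irreducible summands $W_1, W_2$ of $V$ (images of equivariant maps from $S^\lambda$, $S^\mu$ with $\lambda,\mu\neq(n)$), takes standard tableaux of those shapes with $1$ and $2$ in the same column, and applies Lemma \ref{lem:2} to produce two minimal generators sharing the factor $x_1-x_2$. The only difference is that the paper merely asserts the existence of such tableaux, while you verify it explicitly with the column-by-column filling — a detail the paper leaves to the reader.
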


\begin{proof}
  By the hypothesis, $V = W_1 \oplus W_2 \oplus W_3$, where
  \begin{itemize}
  \item $W_1$ is the image of a non-zero $\mathfrak{S}_n$-equivariant
    map $\varphi \colon S^\lambda \to R_{d_1}$ for some $d_1 \in \ZZ$
    and $\lambda \neq (n)$;
  \item $W_2$ is the image of a non-zero $\mathfrak{S}_n$-equivariant
    map $\psi \colon S^\mu \to R_{d_2}$ for some $d_2 \in \ZZ$ and
    $\mu \neq (n)$;
  \item $W_3$ is the $\mathfrak{S}_n$-stable complement of
    $W_1\oplus W_2$ in $V$.
  \end{itemize}
  Observe that we may have $W_3 = 0$.

  Since $\lambda$ and $\mu$ are both different from $(n)$, there are
  standard tableaux $T$ with shape $\lambda$ and $U$ with shape $\mu$
  both containing the entries 1 and 2 in the same column. By Lemma
  \ref{lem:2}, $\varphi (T)$ and $\psi (U)$ are both divisible by
  $x_1-x_2$. Thus $I$ is not a complete intersection.
\end{proof}

Corollary \ref{cor:3} immediately implies the following.

\begin{proposition}
  \label{pro:1}
  Suppose $I\subseteq R$ is an $\mathfrak{S}_n$-stable complete
  intersection ideal. Then $I/\mathfrak{m}I$ is isomorphic to one of
  the following representations:
  \begin{enumerate}[label=\Roman*.]
  \item $\bigoplus^m S^{(n)}$, with $1\leqslant m\leqslant n$, i.e., a
    direct sum of up to $n$ trivial representations;
  \item $S^{(1^n)} \oplus (\bigoplus^m S^{(n)})$, with
    $0\leqslant m\leqslant n-1$, i.e., a direct sum of an alternating
    representation and up to $n-1$ trivial representations;
  \item $S^{(n-1,1)} \oplus (\bigoplus^m S^{(n)})$, with
    $0\leqslant m\leqslant 1$, i.e., a direct sum of a standard
    representation and up to one trivial representation;
  \item for $n=4$, $S^{(2,2)} \oplus (\bigoplus^m S^{(4)})$, with
    $0\leqslant m\leqslant 2$, i.e., a direct sum of the irreducible
    representation $S^{(2,2)}$ and up to two trivial representations.
  \end{enumerate}
\end{proposition}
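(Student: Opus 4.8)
The plan is to fix an $\mathfrak{S}_n$-equivariant lift $V$ of $I/\mathfrak{m}I$ and analyze its decomposition into irreducibles. Since a basis of $V$ is a minimal generating set of $I$ and $I$ is a complete intersection, that basis is a regular sequence; in particular its length equals $\dim_\Bbbk V$ and is at most $n$. Because $\Bbbk$ has characteristic zero, $V$ splits as a direct sum of irreducible $\mathfrak{S}_n$-representations, and everything comes down to controlling which irreducibles appear and with what multiplicity.

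First I would bound the non-trivial constituents using Corollary \ref{cor:3}. If two non-trivial irreducibles (possibly isomorphic) occurred as summands of $V$, then $I$ would fail to be a complete intersection; hence at most one non-trivial irreducible $S^\lambda$ (with $\lambda \neq (n)$) appears, and it appears with multiplicity one. Thus either $V \cong \bigoplus^m S^{(n)}$, or $V \cong S^\lambda \oplus \bigl(\bigoplus^m S^{(n)}\bigr)$ for a single $\lambda \neq (n)$ and some $m \geq 0$.

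Next I would determine which $\lambda$ can occur. Realize the non-trivial summand as the image of some non-zero $\varphi \colon S^\lambda \to R_d$; then the images $\varphi(T)$ of the standard tableaux of shape $\lambda$ lie in the chosen basis of $V$, hence belong to a minimal generating set of $I$. If $\lambda$ is not a hook it contains $(2,2)$, and for $n \geq 5$ this is excluded by Corollary \ref{cor:1}; if $\lambda$ is a hook $(a,1^b)$ with $a \geq 2$ and $b \geq 2$ it contains $(2,1,1)$ and is excluded by Corollary \ref{cor:2}. In both excluded cases, Lemma \ref{lem:2} furnishes two standard tableaux whose images are each divisible by $x_1 - x_2$, giving two minimal generators of $I$ with a common factor, which cannot be part of a regular sequence. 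The remaining hooks are $\lambda = (1^n)$ (alternating) and $\lambda = (n-1,1)$ (standard). For $n \geq 5$ no non-hook partition survives, while for $n = 4$ the unique non-hook partition $(2,2)$ is reached by neither corollary (Corollary \ref{cor:1} requires $n \geq 5$, and Corollary \ref{cor:2} concerns hooks) and is therefore allowed. This yields exactly the four families of non-trivial summands.

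Finally I would extract the multiplicity bounds from $\dim_\Bbbk V \leq n$. With no non-trivial summand, $\dim V = m$, so $1 \leq m \leq n$ (the lower bound since $I \neq 0$), giving family~I. With $\lambda = (1^n)$, $\dim V = 1 + m \leq n$ gives $0 \leq m \leq n-1$ (family~II); with $\lambda = (n-1,1)$, $\dim V = (n-1) + m \leq n$ gives $0 \leq m \leq 1$ (family~III); and for $n = 4$ with $\lambda = (2,2)$, $\dim V = 2 + m \leq 4$ gives $0 \leq m \leq 2$ (family~IV). I expect the main obstacle to be the middle step: Corollaries \ref{cor:1} and \ref{cor:2} were stated for ideals generated by a single irreducible, so I must make sure their obstruction survives when $V$ also carries trivial summands. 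The crucial observation is that the two tableaux images $\varphi(T_1),\varphi(T_2)$ remain members of a minimal generating set of the possibly larger ideal $I$, so their shared factor $x_1 - x_2$ still prevents the full generating set from being a regular sequence, independently of the extra trivial generators.
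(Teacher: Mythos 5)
Your proof is correct and takes essentially the same route as the paper: Corollary \ref{cor:3} limits $V$ to at most one non-trivial irreducible summand, Lemma \ref{lem:2} via the tableau arguments of Corollaries \ref{cor:1} and \ref{cor:2} restricts that summand to $S^{(1^n)}$, $S^{(n-1,1)}$, or (for $n=4$) $S^{(2,2)}$, and the bound $\dim_\Bbbk V \leqslant n$ on the length of a regular sequence gives the stated ranges for $m$. You are in fact more explicit than the paper, whose proof consists of the single remark that Corollary \ref{cor:3} immediately implies the result; in particular, your observation that the common-factor obstruction of Corollaries \ref{cor:1} and \ref{cor:2} persists when trivial summands sit alongside the non-trivial one is a point the paper leaves implicit.
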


\section{Graded characters}
\label{sec:resol-char}

Let $I \subseteq R$ be an $\mathfrak{S}_n$-stable complete
intersection ideal.  In this section, we obtain formulas for the
graded character of $R/I$.  Recall that the Koszul complex on the
generators of $I$ is a minimal free resolution of the quotient $R/I$
\cite[Cor.~17.5]{MR1322960}. Since $I$ is $\mathfrak{S}_n$-stable, the
action of $\mathfrak{S}_n$ extends to the free modules in the Koszul
complex and commutes with the differentials.

Throughout this section, we will denote by
$\langle g_1,\ldots,g_r\rangle$ the graded $\Bbbk$-vector space
spanned by the polynomials $g_1,\ldots,g_r$, as opposed to the ideal
generated by the same polynomials which we denote by
$(g_1,\ldots,g_r)$.  Unless otherwise noted, tensor products
are taken over $\Bbbk$.

We begin by describing the graded character of the polynomial ring
$R$, which will play a fundamental role in our results.  Then we will
analyze the four cases of Proposition \ref{pro:1} separately.

\subsection{The graded character of the polynomial ring}
\label{sec:grad-char-polyn}

Let us begin by introducing our notation for characters; the reader
may consult \cite[\S~1.8]{MR1824028} for the basic definitions.
Given a representation $V$ of $\mathfrak{S}_n$ over $\Bbbk$, we denote
its character by $\bigchi_V$.  Recall that $\bigchi_V$ is a class
function on $\mathfrak{S}_n$.  The set $\mathcal{C}$ of class
functions on $\mathfrak{S}_n$ is a commutative ring with pointwise
addition and multiplication.  We write $\bigchi \bigchi'$ for
the product of two class functions $\bigchi,\bigchi' \in \mathcal{C}$.
For $\lambda \vdash n$, let $\bigchi^\lambda = \bigchi_{S^\lambda}$
denote the irreducible character of $\mathfrak{S}_n$ corresponding to
the irreducible representation $S^\lambda$.  The irreducible
characters $\bigchi^\lambda$ form a basis of $\mathcal{C}$ as a
$\Bbbk$-vector space.

If $V = \bigoplus_{d\in\ZZ} V_d$ is a graded representation of
$\mathfrak{S}_n$ over $\Bbbk$, then $\bigchi_V$ will denote its graded
character; namely,
\begin{equation*}
  \bigchi_V = \sum_{d\in \ZZ} \bigchi_{V_d} t^d.
\end{equation*}
The graded character $\bigchi_V$ is an element of the power series
ring $\mathcal{C} \llbracket t\rrbracket$.  With the exception of the
irreducible characters $\bigchi^\lambda$, all characters we consider
in this section are graded.

Let us recall some facts about the invariant theory of
$\mathfrak{S}_n$, which can be found, for example, in
\cite[Thm.~4.1]{MR2590895}.  We denote by $R^{\mathfrak{S}_n}$ the
$\mathfrak{S}_n$-invariant subring of $R$. Denote by
$e_i \in R^{\mathfrak{S}_n}$ the $i$-th elementary symmetric
polynomial. The coinvariant algebra of $\mathfrak{S}_n$ on $R$,
denoted by $R_{\mathfrak{S}_n}$, is the quotient
$R/(e_1,\ldots,e_n)$. Then we have that:
\begin{enumerate}[label=(\alph*)]
\item\label{item:2} $R^{\mathfrak{S}_n} = \Bbbk [e_1,\ldots,e_n]$; in
  particular, $R^{\mathfrak{S}_n}$ is a polynomial ring;
\item\label{item:1}
  $R \cong R^{\mathfrak{S}_n} \otimes R_{\mathfrak{S}_n}$ as graded
  representations of $\mathfrak{S}_n$.
\end{enumerate}
Using \ref{item:2}, we can write the Hilbert series of
$R^{\mathfrak{S}_n}$ as
\begin{equation*}
  \frac{1}{\prod_{j=1}^n (1-t^j)}.
\end{equation*}
By definition, $\mathfrak{S}_n$ acts trivially on
$R^{\mathfrak{S}_n}$; thus the graded character of the ring of
invariants is
\begin{equation*}
  \bigchi_{R^{\mathfrak{S}_n}} = \frac{\bigchi^{(n)}}{\prod_{j=1}^n (1-t^j)}.
\end{equation*}
Combining fact \ref{item:1} with the equation above, we obtain the
formula
\begin{equation*}
  \bigchi_R = \bigchi_{R^{\mathfrak{S}_n}} \bigchi_{R_{\mathfrak{S}_n}} =
  \frac{\bigchi_{R_{\mathfrak{S}_n}}}{\prod_{j=1}^n (1-t^j)}.
\end{equation*}

To describe the graded character of $R_{\mathfrak{S}_n}$ we need some
additional tools from the theory of symmetric functions which can be
found, for example, in \cite[\S~I.3, \S~III.2,
\S~III.6]{MR1354144}.  The \emph{Schur polynomials}, denoted by
$s_\lambda (x_1,\ldots,x_n)$, are a family of symmetric polynomials.
The polynomials $s_\lambda (x_1,\ldots,x_n)$ with $\lambda \vdash d$
form a basis of the graded component $(R^{\mathfrak{S}_n})_d$ as a
$\Bbbk$-vector space. Therefore, as $\lambda$ varies over all
partitions of all natural numbers, the Schur polynomials form a basis
of $R^{\mathfrak{S}_n}$ as a $\Bbbk$-vector space.  By extending
scalars to the ring $\Bbbk [t]$, we deduce that the Schur polynomials
form a basis of $R^{\mathfrak{S}_n} [t]$ as a $\Bbbk [t]$-module.
Another family of symmetric polynomials are the \emph{Hall-Littlewood
  polynomials}, denoted by $P_\lambda (x_1,\ldots,x_n;t)$, which are
indexed by a partition and a parameter $t$.  As $\lambda$ varies over
all partitions of all natural numbers, the Hall-Littlewood polynomials
form another basis of $R^{\mathfrak{S}_n} [t]$ as a
$\Bbbk [t]$-module.  The transition between these two bases of
$R^{\mathfrak{S}_n} [t]$ is encoded by the formula
\begin{equation*}
  s_\lambda (x_1,\ldots,x_n) = \sum_{\mu \vdash m} K_{\lambda,\mu} (t)
  P_\mu (x_1,\ldots,x_n;t),
\end{equation*}
where $\lambda \vdash m\in\mathbb{N}$. The coefficients
$K_{\lambda,\mu} (t)$ are in $\ZZ [t]$ and are known as
\emph{Kostka-Foulkes} polynomials. It is worth mentioning that the
polynomials $K_{\lambda,\mu} (t)$ can be computed using a
combinatorial procedure due to A.~Lascoux and M.P.~Sch\"utzenberger (see
\cite[Ch.~III (6.5) (i)]{MR1354144}).  For our purposes, we need to
use a slightly modified version of $K_{\lambda,\mu} (t)$ which we will
denote by $\tilde{K}_{\lambda,\mu} (t)$. For
$\mu = (\mu_1,\ldots,\mu_r) \vdash n$, introduce the integer
\begin{equation*}
  n(\mu) = \sum_{i=1}^r (i-1)\mu_i,
\end{equation*}
then define
\begin{equation*}
  \tilde{K}_{\lambda,\mu} (t) = K_{\lambda,\mu} (1/t) t^{n(\mu)}.
\end{equation*}

Finally, we can write a formula for the graded character of
$R_{\mathfrak{S}_n}$:
\begin{equation*}
  \bigchi_{R_{\mathfrak{S}_n}} = \sum_{\lambda \vdash n}
  \bigchi^\lambda \tilde{K}_{\lambda, (1^n)} (t).
\end{equation*}
An elementary proof can be found in \cite{MR1168926}.

\begin{example}
  \label{exa:1}
  Let $n=4$, so $R=\Bbbk [x_1,\ldots,x_4]$.  The polynomials
  $\tilde{K}_{\lambda,(1^4)} (t)$ for $\lambda \vdash 4$ are:
  \begin{align*}
    &\tilde{K}_{(4),(1^4)} (t) = 1,&  &\tilde{K}_{(3,1),(1^4)} (t) = t+t^2+t^3,\\
    &\tilde{K}_{(2,2),(1^4)} (t) = t^2+t^4,&  &\tilde{K}_{(2,1^2),(1^4)} (t) = t^3+t^4+t^5,\\
    &\tilde{K}_{(1^4),(1^4)} (t) = t^6;& &
  \end{align*}
  this can be deduced from the table of polynomials
  $K_{\lambda,\mu} (t)$ for $n=4$ in
  \cite[p.~239]{MR1354144}. Therefore we have
  \begin{equation}
    \label{eq:5}
    \begin{split}
      \bigchi_{R_{\mathfrak{S}_4}} &= \bigchi^{(4)} + \bigchi^{(3,1)} t + (\bigchi^{(3,1)} + \bigchi^{(2,2)})t^2 + (\bigchi^{(3,1)} + \bigchi^{(2,1^2)})t^3\\
      &\quad + (\bigchi^{(2,2)} + \bigchi^{(2,1^2)})t^4 +
      \bigchi^{(2,1^2)} t^5 + \bigchi^{(1^4)} t^6,
    \end{split}
  \end{equation}
  and
  \begin{equation*}
    \begin{split}
      \bigchi_R &= \frac{\bigchi_{R_{\mathfrak{S}_4}}}{\prod_{j=1}^4
        (1-t^j)}
      = \bigchi_{R_{\mathfrak{S}_4}} (1+t+2t^2+3t^3+ 5t^4 + 6t^5 + \ldots)  =\\
      &= \bigchi^{(4)} + (\bigchi^{(4)} + \bigchi^{(3,1)}) t + (2\bigchi^{(4)} + 2\bigchi^{(3,1)} + \bigchi^{(2,2)})t^2 +\\
      &\quad +(3\bigchi^{(4)} + 4\bigchi^{(3,1)} + \bigchi^{(2,2)} + \bigchi^{(2,1^2)})t^3 + \\
      &\quad +(5\bigchi^{(4)} + 6\bigchi^{(3,1)} + 3\bigchi^{(2,2)} +
      2\bigchi^{(2,1^2)})t^4 + \ldots
    \end{split}
  \end{equation*}
\end{example}


\subsection{Case I: trivial representations}
\label{sec:triv-repr}

\begin{theorem}
  \label{thm:1}
  Let $I\subseteq R$ be an $\mathfrak{S}_n$-stable complete
  intersection ideal.  Assume $I/\mathfrak{m}I$ is isomorphic to a
  direct sum of $m$ trivial representations in degrees
  $c_1,\ldots,c_m$.  Then the graded character of $R/I$ is
  \begin{equation*}
    \bigchi_{R/I} = \bigchi_R \prod_{i=1}^m (1-t^{c_i}).
  \end{equation*}
\end{theorem}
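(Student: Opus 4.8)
The plan is to compute $\bigchi_{R/I}$ as the Euler characteristic of an $\mathfrak{S}_n$-equivariant Koszul resolution. First I would choose an $\mathfrak{S}_n$-equivariant lift $V$ of $I/\mathfrak{m}I$; by hypothesis $V \cong \bigoplus^m S^{(n)}$, with its $m$ one-dimensional summands sitting in degrees $c_1,\ldots,c_m$, so a homogeneous basis $f_1,\ldots,f_m$ of $V$ consists of symmetric (i.e.\ invariant) polynomials and is a minimal generating set, hence a regular sequence, for $I$. As recalled at the start of this section, the Koszul complex on $f_1,\ldots,f_m$ is then a minimal free resolution of $R/I$ by graded $\mathfrak{S}_n$-representations with equivariant differentials, whose term in homological degree $k$ is $R \otimes \bigwedge^k V$.

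Next I would pass to graded characters. Restricting the resolution to a fixed internal degree gives, for every $g \in \mathfrak{S}_n$, an exact sequence of finite-dimensional $\Bbbk$-vector spaces, and the trace of $g$ is additive along it; summing over all internal degrees therefore yields
\begin{equation*}
  \bigchi_{R/I} = \sum_{k=0}^m (-1)^k \bigchi_{R \otimes \bigwedge^k V}.
\end{equation*}

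The main computation is the graded character of each term. Since $V$ is a direct sum of trivial representations, so is every exterior power $\bigwedge^k V$; tracking the internal grading, the wedge of the summands indexed by $i_1 < \cdots < i_k$ lives in degree $c_{i_1} + \cdots + c_{i_k}$, so
\begin{equation*}
  \bigchi_{\bigwedge^k V} = \bigchi^{(n)} \sum_{1 \le i_1 < \cdots < i_k \le m} t^{c_{i_1} + \cdots + c_{i_k}},
\end{equation*}
namely the $k$-th elementary symmetric polynomial in $t^{c_1},\ldots,t^{c_m}$ times the trivial character. As $\bigchi^{(n)}$ is the multiplicative identity of the ring $\mathcal{C}$ of class functions and characters are multiplicative on tensor products, this gives $\bigchi_{R \otimes \bigwedge^k V} = \bigchi_R \sum_{1 \le i_1 < \cdots < i_k \le m} t^{c_{i_1} + \cdots + c_{i_k}}$.

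Finally I would combine these and apply the elementary identity $\sum_{k=0}^m (-1)^k e_k(y_1,\ldots,y_m) = \prod_{i=1}^m (1-y_i)$ with $y_i = t^{c_i}$, obtaining
\begin{equation*}
  \bigchi_{R/I} = \bigchi_R \sum_{k=0}^m (-1)^k \sum_{1 \le i_1 < \cdots < i_k \le m} t^{c_{i_1} + \cdots + c_{i_k}} = \bigchi_R \prod_{i=1}^m (1-t^{c_i}).
\end{equation*}
I expect the only delicate points to be the bookkeeping of the internal twist on the Koszul terms --- making sure that each basis vector dual to $f_i$ carries the shift $t^{c_i}$ --- together with the justification that the alternating sum of characters along the resolution computes $\bigchi_{R/I}$ degree by degree. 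The representation-theoretic input (triviality of $V$, and hence of every $\bigwedge^k V$) and the symmetric-function identity are then immediate.
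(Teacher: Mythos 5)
Your proof is correct, but it takes a different route from the paper's. The paper proves this theorem by induction on $m$: since each $f_i$ is symmetric, multiplication by $f_i$ is already an $\mathfrak{S}_n$-equivariant map (no twist needed), so one gets short exact sequences
\begin{equation*}
  0 \to R/(f_1,\ldots,f_{i-1})(-c_i) \xrightarrow{\cdot f_i} R/(f_1,\ldots,f_{i-1}) \to R/(f_1,\ldots,f_i) \to 0,
\end{equation*}
and the character formula follows by peeling off one factor $(1-t^{c_i})$ at a time. You instead take the whole Koszul resolution $R \otimes \bigwedge^\bullet V$ at once, observe that each $\bigwedge^k V$ is a sum of trivial representations sitting in degrees $c_{i_1}+\cdots+c_{i_k}$, and contract the alternating sum via the elementary symmetric function identity. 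Both arguments are sound; your identification of $\bigchi_{\bigwedge^k V}$ and the internal-degree bookkeeping are handled correctly, and the equivariance of the Koszul complex is exactly what the paper records at the start of this section. What the paper's induction buys is economy: it never needs exterior powers, because triviality of the generators makes each multiplication map equivariant on the nose. What your approach buys is uniformity: it is precisely the argument the paper itself deploys later for Case III (the standard representation, where $\bigwedge^u S^{(n-1,1)} \cong S^{(n-u,1^u)}$) and Case IV (where $\bigwedge^2 S^{(2,2)} \cong S^{(1^4)}$), specialized to the case where all exterior powers are trivial; in that sense your proof makes Case I a degenerate instance of the general Koszul-character computation rather than a separate inductive argument.
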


\begin{proof}
  Let $V$ be an equivariant lift of $I/\mathfrak{m}I$ to $R$.  In this
  case, $V = \langle f_1, \ldots, f_m\rangle$, where each $f_i$ is a
  homogeneous symmetric polynomial and $\deg(f_i) = c_i$.  Moreover,
  $I$ is minimally generated by the regular sequence
  $f_1, \ldots, f_m$.

  The proof is by induction on $m$. If $m=1$, then we have a short exact sequence
  \begin{equation*}
    0 \to R(-c_1) \xrightarrow{\cdot f_1} R \to R/I \to 0
  \end{equation*}
  of $R$-modules and $\mathfrak{S}_n$-representations.
  By the additivity of characters along exact sequences, we obtain
  \begin{equation*}
    \bigchi_{R/I} = \bigchi_R - \bigchi_{R(-c_1)} =
    \bigchi_R - \bigchi_R t^{c_1} = \bigchi_R (1-t^{c_1}),
  \end{equation*}
  which proves the base case of the induction.

  If $m>1$, then we have a short exact sequence
  \begin{equation*}
    0 \to R/(f_1,\ldots,f_{m-1}) (-c_m) \xrightarrow{\cdot f_m}
    R/(f_1,\ldots,f_{m-1}) \to R/I \to 0
  \end{equation*}
  of $R$-modules and $\mathfrak{S}_n$-representations.
  Using the inductive hypothesis, we know that
  \begin{equation*}
    \bigchi_{R/(f_1,\ldots,f_{m-1})} = \bigchi_R \prod_{i=1}^{m-1} (1-t^{c_i}).
  \end{equation*}
  Finally, by additivity of characters, we have
  \begin{equation*}
    \bigchi_{R/I} = \bigchi_R \prod_{i=1}^{m-1} (1-t^{c_i}) -
    \bigchi_R \prod_{i=1}^{m-1} (1-t^{c_i}) t^{c_m} =
    \bigchi_R \prod_{i=1}^m (1-t^{c_i}).
  \end{equation*}
\end{proof}

\begin{example}
  \label{exa:2}
  Let $n=4$, so $R=\Bbbk [x_1,\ldots,x_4]$.  Consider the ideal
  $I \subseteq R$ generated by the regular sequence
  $e_1^3,e_1^2-e_2,e_3,e_4$.

  By Theorem \ref{thm:1}, we have
  \begin{align*}
    \bigchi_{R/I} &=\bigchi_R (1-t^2)(1-t^3)^2(1-t^4)=\\
                  &=\frac{\bigchi_{R_{\mathfrak{S}_4}}}{\prod_{j=1}^4 (1-t^j)} (1-t^2)(1-t^3)^2(1-t^4) =\\
                  &=\bigchi_{R_{\mathfrak{S}_4}}(1+t+t^2).
  \end{align*}
  Using equation \eqref{eq:5}, we obtain
  \begin{equation*}
    \begin{split}
      \bigchi_{R/I} &=
      \bigchi^{(4)} + (\bigchi^{(4)}+\bigchi^{(3,1)})t+(\bigchi^{(4)}+2\bigchi^{(3,1)}+\bigchi^{(2,2)})t^2\\
      &\quad +(3\bigchi^{(3,1)}+\bigchi^{(2,2)}+\bigchi^{(2,1^2)})t^3+(2\bigchi^{(3,1)}+2\bigchi^{(2,2)}+2\bigchi^{(2,1^2)})t^4\\
      &\quad +(\bigchi^{(3,1)}+\bigchi^{(2,2)}+3\bigchi^{(2,1^2)})t^5+(\bigchi^{(2,2)}+2\bigchi^{(2,1^2)}+\bigchi^{(1^4)})t^6\\
      &\quad +(\bigchi^{(2,1^2)}+\bigchi^{(1^4)})t^7 +
      \bigchi^{(1^4)}t^8.
    \end{split}
  \end{equation*}
\end{example}

\subsection{Case II: one alternating representation}
\label{sec:altern-repr}

\begin{theorem}
  \label{thm:2}
  Let $I\subseteq R$ be an $\mathfrak{S}_n$-stable complete
  intersection ideal.  Assume $I/\mathfrak{m}I$ is isomorphic to a
  direct sum of one alternating representation in degree $d$ and $m$
  trivial representations in degrees $c_1,\ldots,c_m$.  Then the
  graded character of $R/I$ is
  \begin{equation*}
    \bigchi_{R/I} = \bigchi_R (\bigchi^{(n)} -\bigchi^{(1^n)} t^d)
    \prod_{i=1}^m (1-t^{c_i}).
  \end{equation*}
\end{theorem}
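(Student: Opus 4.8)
The plan is to reduce to Theorem~\ref{thm:1} by splitting off the single alternating generator via a short exact sequence. By Proposition~\ref{pro:1} (Case II), an $\mathfrak{S}_n$-equivariant lift of $I/\mathfrak{m}I$ has the form $\langle g\rangle \oplus \langle f_1,\ldots,f_m\rangle$, where $g$ is a homogeneous alternating polynomial of degree $d$ spanning a copy of $S^{(1^n)}$, and $f_1,\ldots,f_m$ are homogeneous symmetric polynomials with $\deg(f_i)=c_i$ spanning $m$ trivial representations. Any basis of this lift is a minimal generating set of $I$, so $g,f_1,\ldots,f_m$ is a regular sequence. Set $J=(f_1,\ldots,f_m)$; this is an $\mathfrak{S}_n$-stable complete intersection with $J/\mathfrak{m}J$ a direct sum of $m$ trivial representations in degrees $c_1,\ldots,c_m$, so Theorem~\ref{thm:1} applies and gives $\bigchi_{R/J}=\bigchi_R\prod_{i=1}^m(1-t^{c_i})$.

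Next I would produce a short exact sequence relating $R/I$ to $R/J$. Since $g,f_1,\ldots,f_m$ is a regular sequence and such sequences of homogeneous elements may be permuted, the reordering $f_1,\ldots,f_m,g$ is also a regular sequence; in particular $f_1,\ldots,f_m$ is regular (so $J$ is indeed a complete intersection) and $g$ is a nonzerodivisor on $R/J$. Consequently multiplication by $g$ is injective on $R/J$ with image $I/J$, yielding a short exact sequence of graded $R$-modules
\begin{equation*}
  0 \to (R/J)(-d) \xrightarrow{\cdot g} R/J \to R/I \to 0.
\end{equation*}

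The one genuinely delicate point, and the step I expect to be the main obstacle, is that multiplication by $g$ is \emph{not} $\mathfrak{S}_n$-equivariant, but only equivariant up to a sign twist. Indeed, since $g$ is alternating we have $\sigma(g)=\sgn(\sigma)\,g$ for every $\sigma\in\mathfrak{S}_n$, so $\sigma\bigl(g\cdot\overline{h}\bigr)=\sgn(\sigma)\,g\cdot\sigma(\overline{h})$ for $\overline{h}\in R/J$. To make the map a morphism of graded $\mathfrak{S}_n$-representations one must twist the source by the alternating representation $S^{(1^n)}$; the correct exact sequence is
\begin{equation*}
  0 \to S^{(1^n)}\otimes (R/J)(-d) \xrightarrow{\cdot g} R/J \to R/I \to 0.
\end{equation*}
Equivariance of this map is then a direct check, using $\sgn(\sigma)^2=1$ to cancel the two occurrences of the sign.

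Finally I would apply additivity of graded characters along this sequence. Tensoring with $S^{(1^n)}$ multiplies the graded character by $\bigchi^{(1^n)}$, and the degree shift by $d$ multiplies by $t^d$, so
\begin{equation*}
  \bigchi_{R/I} = \bigchi_{R/J} - \bigchi^{(1^n)}t^d\,\bigchi_{R/J}
  = \bigchi_{R/J}\bigl(\bigchi^{(n)} - \bigchi^{(1^n)}t^d\bigr),
\end{equation*}
where I use that $\bigchi^{(n)}$, the character of the trivial representation, is the multiplicative identity in the ring of class functions. Substituting the value of $\bigchi_{R/J}$ from Theorem~\ref{thm:1} gives
\begin{equation*}
  \bigchi_{R/I} = \bigchi_R\bigl(\bigchi^{(n)}-\bigchi^{(1^n)}t^d\bigr)\prod_{i=1}^m(1-t^{c_i}),
\end{equation*}
which is the desired formula.
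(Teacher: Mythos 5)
Your proof is correct and takes essentially the same approach as the paper's: both rest on the same key idea of tensoring the source of the multiplication-by-$g$ map with $S^{(1^n)}$ to repair equivariance, combined with the Theorem~\ref{thm:1} mechanism for the symmetric generators. The only difference is cosmetic ordering --- you quotient by $J=(f_1,\ldots,f_m)$ first so that Theorem~\ref{thm:1} can be cited as a black box, whereas the paper quotients by $(g)$ first and then reruns Theorem~\ref{thm:1}'s induction on the remaining $f_i$.
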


\begin{proof}
  Let $V$ be an equivariant lift of $I/\mathfrak{m}I$ to $R$.  In this
  case, $V = \langle g,f_1,\ldots,f_m\rangle$, where $g$ is a
  homogeneous alternating polynomial of degree $d$, and 
  $f_i$ is a homogeneous symmetric polynomial of degree $c_i$.

  We have a short exact sequence
  \begin{equation*}
    0 \rightarrow R (-d) \otimes S^{(1^n)}
    \xrightarrow{\cdot g} R
    \rightarrow R/(g) \rightarrow 0
  \end{equation*}
  of $R$-modules and $\mathfrak{S}_n$-representations. The tensor
  product with the alternating representation is needed make
  multiplication by $g$ into a map of
  $\mathfrak{S}_n$-representations, because $g$ is alternating.
  
  By the additivity of characters, we get
  \begin{equation*}
    \begin{split}
      \bigchi_{R/(g)}
      &= \bigchi_R - \bigchi_{R(-d) \otimes S^{(1^n)}} =\\
      &= \bigchi_R - \bigchi_{R(-d)} \bigchi^{(1^n)} =\\
      &= \bigchi_R \bigchi^{(n)} - \bigchi_R t^d \bigchi^{(1^n)} =\\
      &=\bigchi_R (\bigchi^{(n)} - \bigchi^{(1^n)} t^d).
    \end{split}
  \end{equation*}
  Now the $f_i$ can be modded out one at a time and the proof follows
  the argument used for Theorem \ref{thm:1}.
\end{proof}

\begin{example}
  \label{exa:3}
  Let $n=4$, so $R=\Bbbk [x_1,\ldots,x_4]$.  Consider the ideal
  $I=(e_1^2,e_2,e_3,v) \subseteq R$, where $v$ denotes the Vandermonde
  determinant
  \begin{equation*}
    \prod_{1\leqslant i<j\leqslant4} (x_i - x_j).
  \end{equation*}
  It can be verified, either by hand or using computer algebra
  software, that $e_1^2,e_2,e_3,v$ is a regular sequence.

  Using Theorem \ref{thm:2} and the formula for
  $\bigchi_{R_{\mathfrak{S}_4}}$ in equation \eqref{eq:5}, we have
  \begin{equation*}
    \begin{split}
      \bigchi_{R/I} &= \bigchi_R (\bigchi^{(4)}-\bigchi^{(1^4)}
      t^6)(1-t^2)^2(1-t^3) =\\
      &= \frac{\bigchi_{R_{\mathfrak{S}_4}}}{\prod_{i=1}^4 (1-t^i)} (\bigchi^{(4)}-\bigchi^{(1^4)} t^6) (1-t^2)^2(1-t^3)=\\
      &=\bigchi^{(4)} +(\bigchi^{(4)}+\bigchi^{(3,1)})t + (2\bigchi^{(3,1)}+\bigchi^{(2,2)})t^2\\
      &\quad +(2\bigchi^{(3,1)} + \bigchi^{(2,2)} + \bigchi^{(2,1^2)})t^3\\
      &\quad +(\bigchi^{(4)} + \bigchi^{(3,1)} + \bigchi^{(2,2)} + 2\bigchi^{(2,1^2)})t^4\\
      &\quad +(\bigchi^{(4)} + \bigchi^{(3,1)} +\bigchi^{(2,2)} + 2\bigchi^{(2,1^2)})t^5\\
      &\quad +(2\bigchi^{(3,1)} + \bigchi^{(2,2)} + \bigchi^{(2,1^2)})t^6\\
      &\quad +(2\bigchi^{(3,1)} +\bigchi^{(2,2)})t^7 +(\bigchi^{(4)} +
      \bigchi^{(3,1)})t^8 + \bigchi^{(4)}t^9.
    \end{split}
  \end{equation*}
\end{example}

\subsection{Case III: one standard representation}
\label{sec:standard-repr}

\begin{theorem}
  \label{thm:3}
  Let $I\subseteq R$ be an $\mathfrak{S}_n$-stable complete
  intersection ideal.  Assume $I/\mathfrak{m}I$ is a direct sum of one
  standard representation in degree $d$ and $m$ trivial
  representations in degrees $c_1,\ldots,c_m$.  Then the graded
  character of $R/I$ is
  \begin{equation*}
    \bigchi_{R/I} = \bigchi_R
    \left(\sum_{u=0}^{n-1} (-1)^u \bigchi^{(n-u,1^u)} t^{du} \right)
    \prod_{i=1}^m (1-t^{c_i}).
  \end{equation*}
\end{theorem}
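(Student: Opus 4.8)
The plan is to mimic the strategy of Theorems \ref{thm:1} and \ref{thm:2}: build an equivariant Koszul resolution of $R/I$ and read off the graded character by additivity. Let $V$ be an equivariant lift of $I/\mathfrak{m}I$, so that $V = W \oplus \langle f_1,\ldots,f_m\rangle$, where $W$ is the image of an injective $\mathfrak{S}_n$-equivariant map $\varphi\colon S^{(n-1,1)} \to R_d$ and each $f_i$ is a homogeneous symmetric polynomial of degree $c_i$. A basis of $V$ is a minimal generating set of $I$, hence a regular sequence, so the Koszul complex on these generators is a minimal free resolution of $R/I$.

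I would first treat the ideal $J$ generated by $W$ alone, and then mod out the symmetric generators $f_1,\ldots,f_m$ one at a time, each contributing a factor $(1-t^{c_i})$ exactly as in the proof of Theorem \ref{thm:1}. For $J$, the relevant resolution is the Koszul complex of the multiplication map $W \otimes R \to R$, $w \otimes r \mapsto wr$. This map is $\mathfrak{S}_n$-equivariant because $W \subseteq R$ and $\mathfrak{S}_n$ acts on $R$ by ring automorphisms, so by functoriality of the exterior algebra the whole Koszul complex
\[ 0 \to \bigwedge\nolimits^{n-1} W \otimes R \to \cdots \to \bigwedge\nolimits^{1} W \otimes R \to R \to R/J \to 0 \]
is a complex of graded $\mathfrak{S}_n$-representations. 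Its $u$-th term is $R(-ud) \otimes \bigwedge\nolimits^{u} W$, since $W$ sits in degree $d$. (This is the same mechanism that produced the twist by $S^{(1^n)}$ in Theorem \ref{thm:2}, which is the special case $\dim W = 1$.)

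The key input is the representation-theoretic identity
\[ \bigwedge\nolimits^{u} S^{(n-1,1)} \cong S^{(n-u,1^u)}, \qquad 0 \leqslant u \leqslant n-1, \]
identifying the exterior powers of the standard representation with the hook representations (so that $\bigwedge\nolimits^{0}$ is the trivial representation and $\bigwedge\nolimits^{n-1}$ is the sign representation). Granting this, the graded character of the $u$-th Koszul term is $\bigchi_R\, \bigchi^{(n-u,1^u)} t^{du}$, and additivity of graded characters along the finite exact resolution gives
\[ \bigchi_{R/J} = \sum_{u=0}^{n-1} (-1)^u \bigchi_R\, \bigchi^{(n-u,1^u)} t^{du} = \bigchi_R \sum_{u=0}^{n-1} (-1)^u \bigchi^{(n-u,1^u)} t^{du}. \]
Reinserting the factors coming from the symmetric generators yields the stated formula.

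The main obstacle is the exterior power identity $\bigwedge\nolimits^{u} S^{(n-1,1)} \cong S^{(n-u,1^u)}$; everything else is the now-familiar Koszul bookkeeping. I would either cite it from a standard reference or prove it directly, for instance by working with the permutation representation $\Bbbk^n \cong S^{(n)} \oplus S^{(n-1,1)}$, using $\bigwedge\nolimits^{\bullet}(\Bbbk^n) \cong \bigwedge\nolimits^{\bullet}(S^{(n)}) \otimes \bigwedge\nolimits^{\bullet}(S^{(n-1,1)})$ to isolate $\bigwedge\nolimits^{u} S^{(n-1,1)}$, and then matching the resulting character against the hook character $\bigchi^{(n-u,1^u)}$. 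A secondary point to verify carefully is that the Koszul differentials remain $\mathfrak{S}_n$-equivariant on the exterior powers, which follows from functoriality of $\bigwedge\nolimits^{\bullet}$ applied to the equivariant multiplication map $W \otimes R \to R$.
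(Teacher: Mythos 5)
Your proposal is correct and takes essentially the same route as the paper: an equivariant Koszul resolution on the generators spanning $W \cong S^{(n-1,1)}$, the key identity $\bigwedge^u S^{(n-1,1)} \cong S^{(n-u,1^u)}$ (which the paper simply cites from \cite[\S~7.3, Exercise 11(b)]{MR1464693}), additivity of graded characters along the resolution, and then modding out the symmetric generators $f_i$ one at a time as in Theorem \ref{thm:1}. The only difference is cosmetic: you sketch a direct proof of the exterior power identity via $\bigwedge^{\bullet}(\Bbbk^n)$, where the paper is content with a reference.
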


Note that $0\leq m\leq 1$ by Proposition \ref{pro:1}.

\begin{proof}
  Let $V$ be an equivariant lift of $I/\mathfrak{m}I$ to $R$.  In this
  case, $V = \langle g_1, \ldots, g_{n-1}, f_1,\ldots,f_m\rangle$,
  where $W = \langle g_1, \ldots, g_{n-1}\rangle$ is isomorphic to the
  standard representation $S^{(n-1,1)}$, the $g_i$ are homogeneous of
  degree $d$, and $f_i$ is a homogeneous symmetric polynomial of
  degree $c_i$.

  The Koszul complex $\mathcal{K} (g)_\bullet$ on $g_1,\ldots,g_{n-1}$
  is a minimal free resolution of $R/(g_1,\ldots,g_{n-1})$ as an $R$-module.  The term of
  $\mathcal{K}(g)_\bullet$ in homological degree $u$ is
  \begin{equation*}
    \mathcal{K} (g)_u \cong R \otimes \bigwedge^u W.
  \end{equation*}
  As indicated in \cite[\S~7.3, Exercise 11(b)]{MR1464693}, one can
  show that
  \begin{equation*}
    \bigwedge^u S^{(n-1,1)} \cong S^{(n-u,1^u)}.
  \end{equation*}
  Hence $\bigwedge^u W$ is isomorphic to a copy of $S^{(n-u,1^u)}$
  concentrated in degree $du$.  It follows that the graded character
  of $\bigwedge^u W$ is
  \begin{equation*}
    \bigchi_{\bigwedge^u W} = \bigchi^{(n-u,1^u)} t^{du},
  \end{equation*}
  and therefore we have
  \begin{equation*}
    \bigchi_{\mathcal{K} (g)_u} =  \bigchi_R \bigchi_{\bigwedge^u W} =
    \bigchi_R \bigchi^{(n-u,1^u)} t^{du}.
  \end{equation*}

  By the additivity of characters along exact sequences, we obtain
  \begin{equation*}
    \bigchi_{R/(g_1,\ldots,g_{n-1})} = \sum_{u=0}^{n-1} (-1)^u
    \bigchi_{\mathcal{K} (g)_u} = \bigchi_R
    \left(\sum_{u=0}^{n-1} (-1)^u \bigchi^{(n-u,1^u)} t^{du} \right).
  \end{equation*}

  Now, if there is an element $f_1$, then it can be modded out as in
  the proof of Theorem \ref{thm:1}.
\end{proof}

\begin{example}
  \label{exa:4}
  Let $n=4$, so $R=\Bbbk [x_1,\ldots,x_4]$.  Consider the ideal
  $I=(x_1^2,x_2^2,x_3^2,x_4^2)$.  The generators of $I$ clearly form a
  regular sequence.  Note that
  $V = \langle x_1^2,x_2^2,x_3^2,x_4^2 \rangle$ is
  $\mathfrak{S}_4$-stable.  Moreover, $V$ is isomorphic to the
  permutation representation of $\mathfrak{S}_4$, which is isomorphic
  to $S^{(4)} \oplus S^{(3,1)}$ (see \cite[Examples 1.9.5,
  2.3.8]{MR1824028}).

  Using Theorem \ref{thm:3}, we obtain
  \begin{equation*}
    \begin{split}
      \bigchi_{R/I} &= \bigchi_R \left( \sum_{u=0}^3 (-1)^u
      \bigchi^{(4-u,1^u)} t^{3u} \right) (1-t^2) =\\
      &=\frac{\bigchi_{R_{\mathfrak{S}_4}}}{\prod_{i=1}^4 (1-t^i)}
      (1-t^2) \sum_{u=0}^3 (-1)^u \bigchi^{(4-u,1^u)} t^{3u}=\\
      &=\bigchi^{(4)} + (\bigchi^{(4)} + \bigchi^{(3,1)})t + (\bigchi^{(4)} + \bigchi^{(3,1)} + \bigchi^{(2,2)})t^2\\
      &\quad + (\bigchi^{(4)} + \bigchi^{(3,1)})t^3 + \bigchi^{(4)}
      t^4.
    \end{split}
  \end{equation*}
  The computation relies on the formula for
  $\bigchi_{R_{\mathfrak{S}_4}}$ in equation \eqref{eq:5}, and
  requires multiplying irreducible characters of
  $\mathfrak{S}_4$. This can be done using the character table of
  $\mathfrak{S}_4$, which we provide in Appendix
  \ref{sec:character-table-S4} for the convenience of the reader.  
\end{example}

\subsection{Case IV: one square representation}
\label{sec:case-iv:-one}

\begin{lemma}
  \label{lem:1}
  The $\mathfrak{S}_4$-representation $\bigwedge^2 S^{(2,2)}$ is
  isomorphic to the alternating representation $S^{(1^4)}$.
\end{lemma}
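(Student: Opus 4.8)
The plan is to use the fact that the second exterior power of a two-dimensional representation is one-dimensional, so that $\bigwedge^2 S^{(2,2)}$ must be one of the two one-dimensional representations of $\mathfrak{S}_4$, namely the trivial representation $S^{(4)}$ or the alternating representation $S^{(1^4)}$. Since $\dim S^{(2,2)} = 2$, we have $\dim \bigwedge^2 S^{(2,2)} = 1$, so the entire problem reduces to deciding between these two candidates, which I would do by computing a character.

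First I would record the values of $\bigchi^{(2,2)}$ on the five conjugacy classes of $\mathfrak{S}_4$, indexed by the cycle types $1^4$, $2\,1^2$, $2^2$, $3\,1$ and $4$; from the character table of $\mathfrak{S}_4$ these are $(2,0,2,-1,0)$. Next I would apply the standard formula
\[
  \bigchi_{\bigwedge^2 V}(g) = \tfrac{1}{2}\bigl(\bigchi_V(g)^2 - \bigchi_V(g^2)\bigr)
\]
class by class. This requires knowing the cycle type of $g^2$ for each class: a transposition and a double transposition both square to the identity, a $3$-cycle squares to a $3$-cycle, and a $4$-cycle squares to a double transposition. Substituting $\bigchi^{(2,2)}$ into the formula then yields the character $(1,-1,1,1,-1)$, which is exactly $\bigchi^{(1^4)}$; hence $\bigwedge^2 S^{(2,2)} \cong S^{(1^4)}$.

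The computation is routine, and the only step demanding attention is correctly tracking the cycle type of $g^2$ in each class --- in particular that a $4$-cycle squares to a double transposition rather than to the identity. As a conceptual check on the sign, I note that $\bigwedge^2 S^{(2,2)}$ is the determinant character $g \mapsto \det\rho(g)$ of the representation $\rho$ on $S^{(2,2)}$; since $S^{(2,2)}$ is trivial on the normal Klein four-subgroup $V_4$ and thus factors through $\mathfrak{S}_4/V_4 \cong \mathfrak{S}_3$ as the standard representation of $\mathfrak{S}_3$, its determinant pulls back the sign character of $\mathfrak{S}_3$, which is $-1$ on the image of any transposition. This confirms that the one-dimensional representation $\bigwedge^2 S^{(2,2)}$ is the alternating representation and not the trivial one.
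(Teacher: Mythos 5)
Your proof is correct, but it takes a genuinely different route from the paper's. The paper argues directly on the module: it takes the basis of standard tableaux $T_1,T_2$ of $S^{(2,2)}$, computes the action of the single transposition $(1\ 2)$ on each using the straightening algorithm, and concludes from $(1\ 2)(T_1\wedge T_2)=-T_1\wedge T_2$ that the exterior square is not trivial. You instead work purely character-theoretically, via the formula $\bigchi_{\bigwedge^2 V}(g)=\tfrac{1}{2}\bigl(\bigchi_V(g)^2-\bigchi_V(g^2)\bigr)$; your bookkeeping is right (the delicate points, that a $4$-cycle squares to a double transposition and a $3$-cycle to a $3$-cycle, are handled correctly), and the resulting class function $(1,-1,1,1,-1)$ on the classes $1^4$, $2\,1^2$, $2^2$, $3\,1$, $4$ is indeed $\bigchi^{(1^4)}$. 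The trade-off: your computation needs the exterior-square character formula and the character table, but requires no tableau combinatorics and generalizes immediately to any representation whose character is known; the paper's computation is self-contained given the tableau calculus already set up in \S 2 and only ever evaluates one group element. Your closing remark is worth highlighting: since $\bigchi^{(2,2)}$ takes the value $2$ on the Klein four-subgroup, $S^{(2,2)}$ factors through $\mathfrak{S}_4/V_4\cong\mathfrak{S}_3$ as the standard representation of $\mathfrak{S}_3$, whose determinant is the sign character of $\mathfrak{S}_3$; pulled back, this is $-1$ on transpositions. That observation alone is a complete, computation-free proof of the lemma, arguably the cleanest of the three arguments.
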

\begin{proof}
  Since $\bigwedge^2 S^{(2,2)}$ has dimension 1, it is either trivial
  or alternating. Recall that the tableaux
  \begin{equation*}
    \ytableausetup{boxsize=normal}
    T_1 =
    \begin{ytableau}
      1 & 2 \\
      3 & 4
    \end{ytableau}
    \qquad\text{and}\qquad
    T_2 =
    \begin{ytableau}
      1 & 3 \\
      2 & 4
    \end{ytableau}
  \end{equation*}
  form a basis of $S^{(2,2)}$. Hence $T_1 \wedge T_2$ spans
  $\bigwedge^2 S^{(2,2)}$. We determine the action of the
  transposition $(1\ 2)$ on $T_1$ and $T_2$ using the straightening
  algorithm described in \cite[\S~7.4]{MR1464693}. We get
  \begin{align*}
    &(1\ 2) T_1
      =
      \begin{ytableau}
        2 & 1 \\
        3 & 4
      \end{ytableau}
            =
            \begin{ytableau}
              1 & 2 \\
              3 & 4
            \end{ytableau}
                  +
                  \begin{ytableau}
                    2 & 3 \\
                    1 & 4
                  \end{ytableau}
                        = T_1 - T_2,\\
    &(1\ 2) T_2
      =
      \begin{ytableau}
        2 & 3 \\
        1 & 4
      \end{ytableau}
            =- T_2.
  \end{align*}
  Therefore
  \begin{multline*}
    (1\ 2) (T_1 \wedge T_2) = ((1\ 2) T_1) \wedge ((1\ 2) T_2) =\\
    =(T_1 - T_2 )\wedge (-T_2) = - T_1 \wedge T_2.
  \end{multline*}
  We conclude that $\bigwedge^2 S^{(2,2)}$ is not trivial.
\end{proof}

\begin{theorem}
  \label{thm:4}
  Let $R = \Bbbk [x_1,\ldots,x_4]$.  Let $I\subseteq R$ be an
  $\mathfrak{S}_4$-stable complete intersection ideal.  Assume
  $I/\mathfrak{m}I$ is isomorphic to a direct sum of one copy of
  $S^{(2,2)}$ in degree $d$ and $m$ trivial representations in degrees
  $c_1,\ldots,c_m$. Then the graded character of $R/I$ is
  \begin{equation*}
    \bigchi_{R/I} = \bigchi_R
    (\bigchi^{(4)} - \bigchi^{(2,2)} t^d + \bigchi^{(1^4)} t^{2d})
    \prod_{i=1}^m (1-t^{c_i}).
  \end{equation*}
\end{theorem}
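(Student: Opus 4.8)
The plan is to follow the Koszul-complex argument used in the proof of Theorem \ref{thm:3}, replacing the standard representation with $S^{(2,2)}$ and invoking Lemma \ref{lem:1} to identify the top exterior power. First I would fix an equivariant lift $V$ of $I/\mathfrak{m}I$ to $R$. Since $I/\mathfrak{m}I$ is a copy of $S^{(2,2)}$ in degree $d$ together with $m$ trivial representations, I can write $V = \langle g_1, g_2, f_1, \ldots, f_m\rangle$, where $W = \langle g_1, g_2\rangle \cong S^{(2,2)}$ with the $g_i$ homogeneous of degree $d$, and each $f_i$ is a homogeneous symmetric polynomial of degree $c_i$. Because $I$ is a complete intersection, its minimal generators $g_1, g_2, f_1, \ldots, f_m$ form a regular sequence.

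Next I would handle the two generators $g_1, g_2$ using the Koszul complex $\mathcal{K}(g)_\bullet$, which resolves $R/(g_1,g_2)$ minimally and whose term in homological degree $u$ is $\mathcal{K}(g)_u \cong R \otimes \bigwedge^u W$. There are only three terms. In homological degree $0$ we get $R$ itself, contributing $\bigchi_R \bigchi^{(4)}$. In homological degree $1$ we get $R \otimes W$, a copy of $S^{(2,2)}$ concentrated in degree $d$, contributing $\bigchi_R \bigchi^{(2,2)} t^d$. In homological degree $2$ we get $R \otimes \bigwedge^2 W$; by Lemma \ref{lem:1} the representation $\bigwedge^2 W \cong \bigwedge^2 S^{(2,2)}$ is the alternating representation $S^{(1^4)}$, and since the two generators live in degree $d$, this exterior power is concentrated in degree $2d$, contributing $\bigchi_R \bigchi^{(1^4)} t^{2d}$.

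Applying additivity of graded characters along this exact complex then gives
\begin{equation*}
  \bigchi_{R/(g_1,g_2)} = \bigchi_R (\bigchi^{(4)} - \bigchi^{(2,2)} t^d + \bigchi^{(1^4)} t^{2d}).
\end{equation*}
Finally, the symmetric generators $f_1, \ldots, f_m$ can be modded out one at a time exactly as in the induction of Theorem \ref{thm:1}: each multiplication map $\cdot f_i$ is $\mathfrak{S}_4$-equivariant because $f_i$ is symmetric, so it yields a short exact sequence that multiplies the character by a factor of $(1-t^{c_i})$, producing the claimed formula.

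The only genuinely nonroutine ingredient is the identification $\bigwedge^2 S^{(2,2)} \cong S^{(1^4)}$, which is precisely Lemma \ref{lem:1}; everything else is additivity of characters along the Koszul resolution together with the degree bookkeeping. The main point to get right is the degree shift on the top exterior power, which lands in degree $2d$ rather than degree $d$, together with the observation that passing to $\bigwedge^2 W$ is exactly what makes the relevant maps $\mathfrak{S}_4$-equivariant, mirroring the role played by the tensor factor $S^{(1^n)}$ in the proof of Theorem \ref{thm:2}.
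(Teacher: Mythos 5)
Your proposal is correct and follows essentially the same route as the paper's proof: an equivariant lift $V = \langle g_1,g_2,f_1,\ldots,f_m\rangle$, the Koszul complex on $g_1,g_2$ with terms $R\otimes\bigwedge^u W$, the identification $\bigwedge^2 S^{(2,2)}\cong S^{(1^4)}$ from Lemma \ref{lem:1} with the degree shift to $2d$, additivity of graded characters along the resolution, and then modding out the symmetric generators $f_i$ as in Theorem \ref{thm:1}. No gaps; the bookkeeping of degree shifts and the appeal to the lemma match the paper exactly.
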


Note that $0\leq m\leq 2$ by Proposition \ref{pro:1}.

\begin{proof}
  Let $V$ be an equivariant lift of $I/\mathfrak{m}I$ to $R$.  In this
  case, $V = \langle g_1, g_2, f_1,\ldots,f_m\rangle$,
  where $W = \langle g_1, g_2\rangle$ is
  isomorphic to $S^{(2,2)}$, $g_1$ and $g_2$ are homogeneous
  of degree $d$, and $f_i$ is a homogeneous symmetric
  polynomial of degree $c_i$.

  The Koszul complex $\mathcal{K} (g)_\bullet$ on the elements
  $g_1,g_2$ is a minimal free resolution of $R/(g_1,g_2)$ as an
  $R$-module.  The terms of $\mathcal{K}(g)_\bullet$ are
  \begin{align*}
    &\mathcal{K} (g)_0 \cong R \otimes \bigwedge^0 W \cong R \otimes S^{(4)},\\
    &\mathcal{K} (g)_1 \cong R(-d) \otimes \bigwedge^1 W \cong R(-d) \otimes S^{(2,2)},\\
    &\mathcal{K} (g)_2 \cong R(-2d) \otimes \bigwedge^2 W \cong R(-2d) \otimes S^{(1^4)},
  \end{align*}
  where the last line follows from Lemma \ref{lem:1}. By the
  additivity of characters along exact sequences, we obtain
  \begin{equation*}
    \bigchi_{R/(g_1,g_2)} = \sum_{u=0}^{2} (-1)^u \bigchi_{\mathcal{K} (g)_u}
    = \bigchi_R (\bigchi^{(4)} - \bigchi^{(2,2)} t^d + \bigchi^{(1^4)} t^{2d}).
  \end{equation*}

  Now any element $f_i$ can be modded out as in the proof of Theorem
  \ref{thm:1}.
\end{proof}

\begin{example}
  \label{exa:5}
  The polynomials
  \begin{equation*}
    g_1 = (x_1-x_2)(x_3-x_4), \quad g_2 = (x_1-x_3)(x_2-x_4)
  \end{equation*}
  span a copy of $S^{(2,2)}$ in $R$. In fact, this is an example of
  the construction of the irreducible representations of
  $\mathfrak{S}_n$ due to Specht (see \cite[\S~7.4, Exercise
  17]{MR1464693}).

  Consider the ideal $I = (g_1,g_2,e_2,e_1^3)$. It can be verified,
  either by hand or using computer algebra software, that
  $g_1,g_2,e_2,e_1^3$ is a regular sequence. Using Theorem
  \ref{thm:4}, we obtain
  \begin{equation*}
    \begin{split}
      \bigchi_{R/I} &= \bigchi_R (\bigchi^{(4)} - \bigchi^{(2,2)} t^d + \bigchi^{(1^4)} t^{2d}) (1-t^2) (1-t^3) =\\
      &=\frac{\bigchi_{R_{\mathfrak{S}_4}}}{\prod_{i=1}^4 (1-t^i)}
      (1-t^2) (1-t^3) (\bigchi^{(4)} - \bigchi^{(2,2)} t^d
      + \bigchi^{(1^4)} t^{2d})=\\
      &= \bigchi^{(4)} + (\bigchi^{(4)} + \bigchi^{(3,1)})t +
      (\bigchi^{(4)}+2\bigchi^{(3,1)})t^2 + (\bigchi^{(4)}+2\bigchi^{(3,1)})t^3\\
      &\quad + (\bigchi^{(4)}+\bigchi^{(3,1)})t^4 + \bigchi^{(4)}t^5.
    \end{split}
  \end{equation*}
\end{example}

\subsection{A note on socles}
\label{sec:note-socles}

If $I\subseteq R$ is an artinian ideal, the \emph{socle} of $R/I$ is
defined as
\begin{equation*}
  \{ \bar{r} \in R/I \mid \forall i \in \{1,\ldots,n\}, x_i r \in I\}.
\end{equation*}
In the case of a complete intersection ideal $I$, the quotient $R/I$
is Gorenstein (cf. \cite[Corollary 21.19]{MR1322960}). In particular,
the socle of $R/I$ is a $\Bbbk$-vector space of dimension one (see
\cite[Proposition 21.5]{MR1322960}).  In fact, the socle will be the
graded component of $R/I$ of the highest degree.  When $I$ is
$\mathfrak{S}_n$-stable, so is the socle. Hence it is natural to ask
whether the socle of $R/I$ is a trivial or alternating representation.

The ideals in the examples of this section are all artinian. In
Example \ref{exa:2} the ideal is generated by symmetric polynomials
and the socle is alternating. In Examples \ref{exa:3}, \ref{exa:4}, and
\ref{exa:5} the ideal has generators that are not symmetric and the
socle is trivial.

In fact, the socle of $R/I$ is alternating if and only if $I$ is
generated by symmetric polynomials. Although it may be possible to
prove this fact using the character formulas we presented, this result
is a special case of \cite[Theorem 3.2]{MR1779775} and \cite[Theorem
3.1]{MR2134302}.

\begin{appendix}
  \section{The character table of \texorpdfstring{$\mathfrak{S}_4$}{S4}}
  \label{sec:character-table-S4}
  \begin{center}
    \begin{tabulary}{0.75\textwidth}{K{1.2cm}|K{1.6cm}K{1.6cm}K{1.6cm}K{1.6cm}K{1.6cm}}
      & 1 & 6 & 8 & 6 & 3\\
      & 1 & (1\ 2) & (1\ 2\ 3) & (1\ 2\ 3\ 4) & (1\ 2)(3\ 4)\\ \hline
      $\bigchi^{(4)}$ & 1 & 1 & 1 & 1 & 1\\
      $\bigchi^{(3,1)}$ & 3 & 1 & 0 & -1 & -1\\
      $\bigchi^{(2,2)}$ & 2 & 0 & -1 & 0 & 2\\
      $\bigchi^{(2,1^2)}$ & 3 & -1 & 0 & 1 & -1\\
      $\bigchi^{(1^4)}$ & 1 & -1 & 1 & -1 & 1\\
    \end{tabulary}
  \end{center}
\end{appendix}


\end{document}